\providecommand{\U}[1]{\protect\rule{.1in}{.1in}}
\providecommand{\U}[1]{\protect\rule{.1in}{.1in}}
\theoremstyle{plain}
\newtheorem{theorem}{Theorem}[section]
\newtheorem{proposition}[theorem]{Proposition}
\newtheorem{example}[theorem]{Example}
\newtheorem{remark}[theorem]{Remark}
\newtheorem{problem}[theorem]{Problem}
\newtheorem{lemma}[theorem]{Lemma}
\numberwithin{equation}{section}
\begin{document}
\title[Lineability of the set of bounded non-absolutely summing operators ]{Lineability of the set of bounded linear non-absolutely summing operators }
\author{Geraldo Botelho, Diogo Diniz and Daniel Pellegrino}
\thanks{The third named author is supported by CNPq Grant 308084/2006-3.}

\begin{abstract}
In this note we solve, except for extremely pathological cases, a
question posed by Puglisi and Seoane-Sep\'ulveda on the lineability
of the set of bounded linear non-absolutely summing operators. We
also show how the idea of the proof can be adapted to several
related situations.
\end{abstract}
\maketitle

\section{Introduction and notation}

Henceforth $E,$ $F$ and $G$ will stand for infinite-dimensional (real or complex) Banach spaces. The
topological dual of $F$ is represented by $F^{\ast}.$

According to \cite{Ar, En, gq} and others, a subset $A$ of an infinite-dimensional vector space $X$ is said to be {\it lineable}
if $A\cup\{0\}$ contains an infinite-dimensional subspace of $X$.

The space of absolutely $(r,s)$-summing linear operators from $E$ to $F$ will be
denoted by $\Pi_{r,s}(E;F)$ ($\Pi_{r}(E;F)$ if $r=s)$
and the space of bounded linear operators from $E$ to $F$ will be represented
by $\mathcal{L}(E;F).$ For details on the theory of absolutely summing
operators we refer to \cite{Diestel}.

Recently, D. Puglisi and J. Seoane-Sep\'{u}lveda \cite{seoane} proved, among other interesting results, that if
$E$ has the two series property and $G=F^{\ast}$ for some $F$, then the set%
\[
\mathcal{L}(E;G)\diagdown \Pi_{1}(E;G)
\]
is lineable. In the same paper the authors pose the following question:

\begin{problem}\label{problem}\rm
If $E$ is superreflexive and $p\geq1$, is it true that
\[
\mathcal{L}(E;F)\diagdown \Pi_{p}(E;F)
\]
is lineable for every Banach space $F$?
\end{problem}

M. A. Sofi, in a private communication to the authors, kindly
pointed out that the following situation should be settled first:
given operator ideals ${\mathcal I}_1$ and ${\mathcal I}_2$ and
Banach spaces $E$ and $F$, is it always true that
$\mathcal{I}_1(E;F)\diagdown {\mathcal I}_{2}(E;F)$ is either empty
of lineable? Quite surprisingly, we have:

\begin{example}\rm Let $SS$ denote the ideal of strictly singular linear operators and $E$ be an hereditarily indecomposable complex Banach space. Let us show that the set $\mathcal{L}(E;E)\diagdown SS(E;E)$, which is not empty because of the identity operator, does not contain a two dimensional subspace. Let $u_1, u_2$ be arbitrary linearly independent operators in  $\mathcal{L}(E;E)\diagdown SS(E;E)$. By \cite[Theorem 6]{maurey} there are scalars $\lambda_1, \lambda_2$ and strictly singular operators $v_1, v_2 \in SS(E;E)$ such that $u_1 = \lambda_1 id_E + v_1$ and $u_2 = \lambda_2 id_E + v_2$. It is clear that $\lambda_1 \neq 0\neq \lambda_2$ because $u_1$ and $u_2$ are not strictly singular. Letting $u = \lambda_2u_1 - \lambda_1u_2$ we have that $u \neq 0$ because $u_1$ and $u_2$ are linearly independent and from $u = \lambda_2v_1 - \lambda_1v_2$ we conclude that $u$ is strictly singular. Hence $u$ belongs to the subspace generated by $u_1$ and $u_2$ but $u \notin (\mathcal{L}(E;E)\diagdown SS(E;E)) \cup \{0\}$, proving that $(\mathcal{L}(E;E)\diagdown SS(E;E))\cup \{0\}$ does not contain a two dimensional subspace.
\end{example}

In the absence of a general result, particular situations must be investigated by {\it ad hoc} arguments.
The aim of this short note is to answer Problem \ref{problem} in the positive, except for very
particular quite pathological cases, and to extend the idea of the proof to related situations.

\bigskip

\section{Superreflexive spaces}

By $\mathcal K$ we denote de ideal of compact operators.

\begin{theorem}
\label{aaa}Let $p\geq1$ and $E$ be superreflexive. If either $E$ contains a
complemented infinite-dimensional subspace with unconditional basis or $F$ contains an infinite
unconditional basic sequence, then $\mathcal{K}%
(E;F)\diagdown \Pi_{p}(E;F)$ (hence $\mathcal{L}%
(E;F)\diagdown \Pi_{p}(E;F)$) is lineable.
\end{theorem}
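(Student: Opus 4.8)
The key idea is to reduce to a model situation and use known failures of absolute summability. The prototype is the identity-type operator: on $\ell_q$ (or on a space with unconditional basis), the formal identity or a diagonal operator can be compact yet not absolutely $p$-summing, because $p$-summing operators on infinite-dimensional spaces cannot be "too close" to the identity. Concretely, since $E$ is superreflexive, a theorem of James/Enflo gives that $E$ has an equivalent uniformly convex norm; more useful here is that superreflexivity will let us find, inside either $E$ or $F$, a subspace on which we have good control (an $\ell_q$-like structure, or at least an unconditional basic sequence with nontrivial type/cotype), so that we can exhibit compact non-$p$-summing operators. The hypothesis splits into two cases precisely to guarantee enough structure at one end or the other.

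First I would treat the case where $F$ contains an infinite unconditional basic sequence $(f_n)$. Let $Y = \overline{\mathrm{span}}\{f_n\} \subseteq F$. Since $E$ is superreflexive and infinite-dimensional, it contains a normalized basic sequence $(e_n)$ which is weakly null (use reflexivity plus a gliding-hump/Bessaga--Pe\l czy\'nski argument), and by passing to a subsequence and using superreflexivity one can arrange that $(e_n)$ has some cotype, equivalently that $\sum \|x_n\|^q < \infty$ for weakly-$1$-summable $(x_n)$ fails in a controlled way — the point being that the summing operator $\iota\colon E \to F$ we build must send a weakly-$p$-summable sequence to a non-$\ell_p$-summable one. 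Concretely I would define, for each choice of scalars $(\lambda_n) \in c_0$, an operator $T_\lambda\colon E \to Y$ by $T_\lambda(x) = \sum_n \lambda_n\, a_n^*(x)\, f_n$ where $(a_n^*)$ are coefficient functionals associated to $(e_n)$ extended to $E$ (Hahn--Banach); since $\lambda \in c_0$ and $(e_n)$ is weakly null with $(f_n)$ unconditional, $T_\lambda$ is compact. Choosing the $\lambda_n$ to decay slowly (e.g.\ $\lambda_n = n^{-1/p}$ up to logs, or just $\lambda_n \to 0$ slowly enough) makes $T_\lambda$ fail to be $p$-summing, by testing on $(e_n)$ which is weakly $p$-summable after normalization. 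The infinite-dimensional subspace is then obtained by taking a sequence $\lambda^{(1)}, \lambda^{(2)}, \dots$ of such sequences with disjoint supports (block structure): any nontrivial finite linear combination $\sum_j c_j T_{\lambda^{(j)}}$ is again of the same form with a slowly-decaying coefficient sequence, hence compact but not $p$-summing, so $\overline{\mathrm{span}}\{T_{\lambda^{(j)}}\}$ witnesses lineability.

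Second, the case where $E$ contains a complemented infinite-dimensional subspace $X$ with unconditional basis: let $\pi\colon E \to X$ be a bounded projection. It suffices to produce a lineable set inside $\mathcal{K}(X;F)\setminus\Pi_p(X;F)$ and precompose with $\pi$ (note $\Pi_p$ is an ideal, so $T\pi \in \Pi_p(E;F)$ would force $T = T\pi|_X \in \Pi_p(X;F)$). Now $X$ has an unconditional basis and is superreflexive, hence has nontrivial type and cotype; the unconditionality lets us build diagonal-type operators from $X$ into any chosen infinite-dimensional $Y \subseteq F$ (taking $Y$ with a basis, which any infinite-dimensional Banach space contains a basic sequence of) by the same formula as above, compact because of slow decay of a $c_0$ multiplier times a weakly null basis, and non-$p$-summing by the same weak-$\ell_p$ test. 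The disjoint-support construction again gives the infinite-dimensional subspace. The main obstacle I anticipate is the verification that these diagonal operators genuinely fail to be $p$-summing: this requires exhibiting a weakly $p$-summable sequence in the domain whose image is not strongly $p$-summable, and here is exactly where superreflexivity (giving nontrivial type, hence that the unit vector basis or a normalized weakly null sequence is weakly $p$-summable for $p$ large enough, combined with cotype controlling the image) must be used carefully — one has to choose the decay rate of the multiplier $\lambda$ in tandem with the type/cotype exponents of $E$ and the chosen subspace of $F$. The residual "pathological cases" excluded are precisely superreflexive $E$ with no complemented subspace with unconditional basis, paired with $F$ containing no unconditional basic sequence (the Gowers--Maurey-type spaces), which is why Theorem \ref{aaa} is phrased with this dichotomy rather than for all $F$.
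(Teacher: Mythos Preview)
Your disjoint-support mechanism for manufacturing an infinite linearly independent family whose span avoids $\Pi_p$ is exactly the device the paper uses: partition $\mathbb{N}=\bigcup_j A_j$ into infinitely many infinite pieces, obtain on each piece a compact non-$p$-summing operator $u_j$, and observe that a nontrivial linear combination $\sum_k a_k \widetilde{u_k}$ still fails $p$-summability because the pieces do not interact. So the lineability architecture in your proposal and in the paper coincide.

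The divergence, and the gap, is in how the seed operators $u_j$ are produced. The paper does not construct them: it simply invokes the theorem of Davis and Johnson (Studia Math.\ \textbf{51} (1974)), which guarantees, for every superreflexive infinite-dimensional $X$ and every infinite-dimensional $Y$, an operator in $\mathcal{K}(X;Y)\setminus\Pi_p(X;Y)$. You instead try to build diagonal operators $T_\lambda(x)=\sum_n \lambda_n a_n^*(x) f_n$ and test them on $(e_n)$. When $E$ carries the complemented unconditional basis this can be made to work: unconditionality plus nontrivial type yield an upper $\ell_t$-estimate on the basis, so $(e_n)$ is weakly $t'$-summable, and $\lambda\in c_0\setminus\bigcup_r\ell_r$ suffices. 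But in the other case---only $F$ has the unconditional basic sequence and $E$ is merely superreflexive---your $(e_n)$ is just a weakly null basic sequence in $E$ with no unconditionality, and there is no evident reason it is weakly $r$-summable for any finite $r$; the phrase ``type/cotype exponents'' does not supply one. Worse, without a projection onto $\overline{\mathrm{span}}\{e_n\}$ (not assumed in this case) the formula for $T_\lambda$ need not even define a bounded operator on all of $E$, since the Hahn--Banach extensions $a_n^*$ give no decay for $(a_n^*(x))_n$ at a general $x\in E$. Filling these holes would essentially amount to reproving Davis--Johnson; citing it, as the paper does, is what closes the argument.
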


\begin{proof} Assume that $E$ contains a complemented infinite-dimensional subspace $E_{0}$ with unconditional basis
$(e_{n})_{n=1}^{\infty}$. First consider%
\begin{equation}
\mathbb{N}=A_{1}\cup A_{2}\cup \cdots\label{bbbb}%
\end{equation}
a decomposition of $\mathbb{N}$ into infinitely many infinite
pairwise disjoint subsets $(A_j)_{j=1}^\infty$. Since
$\{e_{n}\,;\,n\in\mathbb{N}\}$ is an unconditional basis, it is well
known (e.g., combine \cite[Proposition 1.c.6]{Lin} and
\cite[Proposition 1.1.9]{Kalton}) that $\{e_{n}\,;\,n\in A_{j}\}$ is
an unconditional basic sequence for every $j \in \mathbb{N}$. Let us
denote by $E_{j}$ the closed span of $\{e_{n}\,;\,n\in A_{j}\}.$ As
a subspace of a superreflexive space, $E_j$ is superreflexive as
well, so from \cite[Theorem]{davis} it follows that for each
$j$ there is an operator%
\[
u_{j}\colon E_{j}\longrightarrow F
\]
belonging to ${\mathcal{K}}(E_{j};F)\diagdown \Pi_{p}(E_{j};F)$.

Denoting by $\varrho$ the unconditional basis constant of $(e_{n})_{n=1}^{\infty}$ we know that%
\[
\left\Vert
{\displaystyle\sum\limits_{j=1}^{\infty}}
\varepsilon_{j}a_{j}e_{j}\right\Vert \leq\varrho\left\Vert
{\displaystyle\sum\limits_{j=1}^{\infty}}
a_{j}e_{j}\right\Vert
\]
for every $\varepsilon_{j}=\pm 1$ and scalars $a_j$. For each $i$ we denote by $P_{i}\colon E_{0}\longrightarrow E_{i}$
the canonical projection onto $E_{i}$. For
\[
y=%
{\displaystyle\sum\limits_{j=1}^{\infty}}
a_{j}e_{j}\in E_{0}%
{\rm ~and~} x=P_{i}(y) \in E_i\] we have
\[
2x=%
{\displaystyle\sum\limits_{j\in A_{i}}}
2a_{j}e_{j}=%
{\displaystyle\sum\limits_{j=1}^{\infty}}
\varepsilon_{j}a_{j}e_{j}+%
{\displaystyle\sum\limits_{j=1}^{\infty}}
\varepsilon_{j}^{\prime}a_{j}e_{j}%
\]
for a convenient choice of signs $\varepsilon_{j}$ and $\varepsilon_{j}^{\prime}.$ Thus
\[
2\left\Vert P_{i}(y)\right\Vert =\left\Vert 2x\right\Vert \leq\left\Vert
{\displaystyle\sum\limits_{j=1}^{\infty}}
\varepsilon_{j}a_{j}e_{j}\right\Vert +\left\Vert
{\displaystyle\sum\limits_{j=1}^{\infty}}
\varepsilon_{j}^{\prime}a_{j}e_{j}\right\Vert \leq2\varrho\left\Vert
y\right\Vert .
\]
So each projection $P_{i}\colon E_{0}\longrightarrow E_{i}$ is continuous and has norm
$\leq\varrho$. This also implies that each $E_{i}$ is a complemented subspace
of $E_{0}$.

If $\pi_{0} \colon E\longrightarrow E_0$ denotes the projection onto $E_0$, for each $j \in \mathbb{N}$ we can define de operator%
\[
\widetilde{u_{j}} \colon E\longrightarrow F~,~\widetilde{u_{j}} := u_j \circ P_j \circ \pi_0.
\]
Since $(P_j \circ \pi_0)(x) = x$ for every $x \in E_j$, it is plain
that $\widetilde{u_{j}}$ belongs to $\mathcal{K}(E;F)\diagdown
\Pi_{p}(E;F)$. Given $n \in \mathbb{N}$ and scalars $a_1, \ldots,
a_n$, with at least one $a_k \neq 0$, $1 \leq k \leq n$, since
$\widetilde{u_{k}}$ fails to be absolutely $p$-summing, there is a
weakly $p$-summable sequence $(x_j)$ in $E_k$ such that $\sum_j
\|{u_{k}}(x_j)\|^p = + \infty$. It is clear that $(x_j)$ is weakly
$p$-summable in $E$ and $\widetilde{u_{k}}(x_j) = u_k(x_j)$ for
every $j$. But $A_k \cap A_i = \emptyset$ for
 $i = 1, \ldots, n$, $i \neq k$, so it follows that $\widetilde{u_{i}}(x_j) = 0$ for every $i = 1, \ldots, n$, $i \neq k$ and $j \in \mathbb{N}$.
 So,
 $$\sum_j \|a_1 \widetilde{u_{1}}(x_j) + \cdots + a_n \widetilde{u_{n}}(x_j)\|^p =  \sum_j \|a_k{u_{k}}(x_j)\|^p = + \infty,$$
 proving that $a_1 \widetilde{u_{1}} + \cdots + a_n \widetilde{u_{n}}$ is not absolutely $p$-summing. This proves that
 the span of $\{\widetilde{u_{j}};j\in\mathbb{N}\}$ is contained in $\mathcal{K}(E;F)\diagdown
\Pi_{p}(E;F)$.

Let us see now that the set $\{\widetilde{u_{j}};j\in\mathbb{N}\}$ is linearly independent. Let $n \in \mathbb{N}$ and $a_1, \ldots, a_n$ be scalars such that
$$a_1\widetilde{u_{1}} + \cdots + a_n \widetilde{u_{n}} = 0. $$
For every $k \in \{1, \ldots, n\}$ we can choose $x_k \in E_k$ such that $\widetilde{u_{k}}(x_k) \neq 0$ because $\widetilde{u_{k}} \neq 0$. But $(P_j \circ \pi_0) (x_k) =  P_j(x_k) = 0$ for every $j = 1, \ldots, n$, $j \neq k$. So,
$$a_k\widetilde{u_{k}}(x_k) = 0 + \cdots + 0 + a_k\widetilde{u_{k}}(x_k) + 0 + \cdots 0 = a_1\widetilde{u_{1}}(x_k) + \cdots + a_n \widetilde{u_{n}}(x_k) = 0. $$
It follows that $a_k = 0$. Hence the span of
$\{\widetilde{u_{j}};j\in\mathbb{N}\}$ is an infinite-dimensional
subspace contained in $\mathcal{K}(E;F)\diagdown \Pi_{p}(E;F)$.

Now, suppose that $F$ contains a subspace $G$ with unconditional basis
$\{e_{n};n\in\mathbb{N}\}$ with unconditional basis constant $\varrho$. Still considering
the subsets $(A_{n})$ of $\mathbb{N}$ as above, define $F_{j}$ as the closed span of $\{e_{n};n\in
A_{j}\}$ and let $P_j \colon G \longrightarrow F_j$ be the corresponding projections. Proceeding as above we conclude that
$\|P_j\| \leq \varrho$. From \cite[Theorem]{davis} we know that for each $j$ there is an operator%
\[
u_{j} \colon E\longrightarrow F_{j}%
\]
belonging to $\mathcal{K}(E;F_{j})\diagdown \Pi_{p}(E;F_{j}).$

Recall that $F_{i}\cap F_{j}=\{0\}$ if $i\neq j.$ So, if $y_{i}\in F_{i}$ and
$y_{j}\in F_{j}$ (with $i\neq j$), we have%
\begin{equation}
\left\Vert y_{i}\right\Vert =\left\Vert P_{i}(y_{i}+y_{j})\right\Vert
\leq\varrho\left\Vert y_{i}+y_{j}\right\Vert . \label{mmm}%
\end{equation}
Now by $\widetilde{u_{j}}$ we mean the composition of $u_{j}$ with
the inclusion from $F_{j}$ to $F$. It is clear that
$\widetilde{u_{j}}$ is compact and fails to be absolutely
$p$-summing. From
(\ref{mmm}) it follows that%
\[
\left\Vert \widetilde{u_{i}}(x)+\widetilde{u_{j}}(x)\right\Vert \geq
\varrho^{-1}\left\Vert \widetilde{u_{i}}(x)\right\Vert
\]
for every $x\in E.$ Hence%
\[
\widetilde{u_{i}}+\widetilde{u_{j}}\in \mathcal{K}(E;F)\diagdown
\Pi_{p}(E;F){\rm ~for~all~} i, j,
\]
and so we can easily deduce that the span of
$\{\widetilde{u_{j}};j\in \mathbb{N}\}$ is contained in
$(\mathcal{K}(E;F)\diagdown$\\$ \Pi_{p}(E;F))\cup\{0\}$. A reasoning
similar to the first case shows that the vectors
$\widetilde{u_{j}}$,
 $j \in \mathbb{N}$, are linearly independent, therefore $\mathcal{K}(E;F)\diagdown \Pi_{p}(E;F)$ is lineable.
\end{proof}

\begin{remark}\rm
Note that Theorem \ref{aaa} solves the problem posed by Puglisi and
Seoane-Sep\'{u}lveda except when $E$ is a superreflexive Banach
space not containing an infinite-dimensional complemented subspace
with unconditional basis (such a space was constructed by V.
Ferenczi \cite{Ferenczi, F2}) {\it and} $F$ does not contain an
infinite-dimensional subspace with unconditional basis (for example,
hereditarily indecomposable spaces). It is in this sense we claim
that Theorem \ref{aaa} solves the problem modulo extremely
pathological cases.
\end{remark}

\medskip

\section{Non necessarily superreflexive spaces}

Examining the proof of Theorem \ref{aaa} it becomes clear that the
result holds if: (i) $E$ contains a sequence
$(E_{n})_{n=1}^{\infty}$ of complemented infinite-dimensional
subspaces such that $E_{n}\cap E_{m}=\{0\}$ if $m\neq n$; (ii)
$\mathcal{L}(E_n;F)\diagdown \Pi_{p}(E_n;F)\neq \emptyset$ for every
$n \in \mathbb{N}$. Having this in mind, the argument of the proof
can be adapted to many other circumstances, even for spaces of
operators on non-superreflexive spaces.

We start by adapting the proof of Theorem \ref{aaa} to spaces of
operators on spaces containing complemented copies of $\ell_{1}$ or
$c_{0}$ (observe that in these cases the domain spaces are not even
reflexive):

\medskip

\begin{proposition}~\\
{\rm (a)} If $E$ contains a complemented copy of $\ell_1$ and $F$ is
not isomorphic to a Hilbert space, then $\mathcal{L}(E;F)\diagdown
\Pi_{1}(E;F)$ is lineable.\\
{\rm (b)} If $E$ contains a complemented copy of $c_0$ and $1\leq
p<2$, then $\mathcal{L}(E;F)\diagdown \Pi_{p}(E;F)$ is lineable for
every Banach space $F$.
\end{proposition}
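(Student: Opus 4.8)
The plan is to verify, in each of the two parts, the two conditions isolated in the paragraph immediately before the statement: (i) that $E$ contains a sequence $(E_{k})_{k=1}^{\infty}$ of complemented infinite-dimensional subspaces with $E_{k}\cap E_{m}=\{0\}$ for $k\neq m$ — realized, in our situations, by projections $Q_{k}\colon E\to E_{k}$ that restrict to the identity on $E_{k}$ and to $0$ on $E_{j}$ for $j\neq k$ — and (ii) that $\mathcal{L}(E_{k};F)\diagdown\Pi_{p}(E_{k};F)\neq\emptyset$ for every $k$. Once (i) and (ii) are in hand, one picks $u_{k}\in\mathcal{L}(E_{k};F)\diagdown\Pi_{p}(E_{k};F)$, sets $\widetilde{u_{k}}:=u_{k}\circ Q_{k}$, and runs the proof of \thmref{aaa} verbatim: disjointness of the index sets makes every nontrivial finite linear combination of the $\widetilde{u_{k}}$ non-$p$-summing and makes the $\widetilde{u_{k}}$ linearly independent, so the span of $\{\widetilde{u_{k}}\colon k\in\mathbb{N}\}$ is an infinite-dimensional subspace inside $(\mathcal{L}(E;F)\diagdown\Pi_{p}(E;F))\cup\{0\}$. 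So in each part I only need (i) and (ii).

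For (a), I would fix a decomposition $\mathbb{N}=\bigcup_{k}A_{k}$ into infinitely many infinite pairwise disjoint sets as in (\ref{bbbb}), identify the given complemented copy of $\ell_{1}$ in $E$ isometrically with the $\ell_{1}$-sum of the subspaces $\ell_{1}(A_{k})$, and take $E_{k}:=\ell_{1}(A_{k})\cong\ell_{1}$; composing the norm-one coordinate projection of $\ell_{1}$ onto $\ell_{1}(A_{k})$ with a fixed projection of $E$ onto $\ell_{1}$ gives the $Q_{k}$, hence (i). For (ii) I would quote the converse of Grothendieck's theorem — $F$ is isomorphic to a Hilbert space if and only if every bounded linear operator from $\ell_{1}$ into $F$ is absolutely $1$-summing (see \cite{Diestel}) — so that, $F$ not being isomorphic to a Hilbert space, $\mathcal{L}(\ell_{1};F)\diagdown\Pi_{1}(\ell_{1};F)\neq\emptyset$; transporting such an operator through the isometries $E_{k}\cong\ell_{1}$ supplies the $u_{k}$.

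For (b) the verification of (i) is identical, with the $c_{0}$-sum of the subspaces $c_{0}(A_{k})$ replacing the $\ell_{1}$-sum, so that each $E_{k}\cong c_{0}$; the content is in (ii). There I would show that for every infinite-dimensional $F$ and every $1\leq p<2$ there is a bounded (in fact compact) operator $c_{0}\to F$ that is not absolutely $p$-summing; transporting it through the isometries $E_{k}\cong c_{0}$ then gives the $u_{k}$. To build such an operator, write $c_{0}$ as the $c_{0}$-sum $\bigl(\bigoplus_{k}\ell_{\infty}^{m_{k}}\bigr)_{c_{0}}$, where $(m_{k})$ is a sequence of positive integers tending to $\infty$ rapidly (to be pinned down at the end); by Dvoretzky's theorem choose, for each $k$, vectors $f^{(k)}_{1},\dots,f^{(k)}_{m_{k}}\in F$ with $\|f^{(k)}_{i}\|\geq 1$ and $\bigl\|\sum_{i}a_{i}f^{(k)}_{i}\bigr\|\leq 2\bigl(\sum_{i}|a_{i}|^{2}\bigr)^{1/2}$ for all scalars; set $\delta_{k}:=\bigl(k^{2}\sqrt{m_{k}}\bigr)^{-1}$ and define $u\bigl((x_{k})_{k}\bigr):=\sum_{k}\delta_{k}\sum_{i}(x_{k})_{i}f^{(k)}_{i}$. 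Since $\bigl\|\delta_{k}\sum_{i}(x_{k})_{i}f^{(k)}_{i}\bigr\|\leq 2\delta_{k}\sqrt{m_{k}}\,\|x_{k}\|_{\infty}=2k^{-2}\|x_{k}\|_{\infty}$, the series converges absolutely and $u$ is bounded — indeed a norm limit of finite-rank operators, so compact. On the other hand, testing the defining inequality of absolute $p$-summability on the $m_{k}$ standard unit vectors of the $k$-th summand $\ell_{\infty}^{m_{k}}$, whose weak $\ell_{p}$-norm in $c_{0}$ equals $1$ (this is where $p\geq 1$ is used), forces any constant that works for $u$ to be at least $\delta_{k}\bigl(\sum_{i}\|f^{(k)}_{i}\|^{p}\bigr)^{1/p}\geq\delta_{k}m_{k}^{1/p}=m_{k}^{1/p-1/2}/k^{2}$. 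As $1/p-1/2>0$, it suffices to let the $m_{k}$ grow fast enough (say $m_{k}\geq k^{6p/(2-p)}$) to make this tend to $\infty$, whence $u$ is not absolutely $p$-summing. The hypothesis $p<2$ is genuinely needed: by Maurey's theorem every operator from $c_{0}$ into a space of cotype $2$ is absolutely $2$-summing, so the statement fails in general for $p\geq 2$.

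The main obstacle is entirely in part (ii) of each case. For (a) it is a genuinely non-elementary input, which I would be content to cite: that $\ell_{1}$ recognizes Hilbert spaces via $1$-summing operators (the converse of Grothendieck's theorem). For (b) it is the uniformity over an arbitrary $F$ — Dvoretzky's theorem furnishes the needed almost-Euclidean finite systems of norm-$\geq 1$ vectors with a good upper $\ell_{2}$-bound inside any infinite-dimensional space, and the only real calculation is the bookkeeping that keeps $\sum_{k}\delta_{k}\sqrt{m_{k}}$ finite (so that $u$ is bounded) while forcing $\delta_{k}m_{k}^{1/p}\to\infty$ (so that $u$ fails to be $p$-summing), which is possible precisely because $p<2$.
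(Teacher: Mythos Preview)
Your proof is correct and follows the paper's scheme exactly: split $\ell_{1}$ (resp.\ $c_{0}$) along a partition of $\mathbb{N}$ into infinitely many isometric copies of itself, invoke a non-coincidence result on each copy, and rerun the argument of \thmref{aaa}. For (a) you cite the converse of Grothendieck's theorem from \cite{Diestel}, whereas the paper cites \cite{LP}; this is the same fact. The only substantive divergence is in (b)(ii): the paper simply quotes \cite{Bot-Pell, st} for $\mathcal{L}(c_{0};F)\diagdown\Pi_{p}(c_{0};F)\neq\emptyset$ when $1\leq p<2$, while you build the operator by hand via Dvoretzky's theorem on a block decomposition $c_{0}\cong\bigl(\bigoplus_{k}\ell_{\infty}^{m_{k}}\bigr)_{c_{0}}$. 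Your route is longer but self-contained and makes the role of $p<2$ visible in the arithmetic $1/p-1/2>0$; the paper's is a one-line citation. Your closing remark (Maurey's theorem showing the bound $p<2$ is sharp for general $F$) is a nice addition not in the paper.
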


\begin{proof} Up to the composition with the corresponding projections, it suffices to work with $E = \ell_1$ in (a) and
$E= c_0$ in (b).\\
(a) Decomposing $\mathbb{N}$ as in (\ref{bbbb}) we have that the
closed span
of each%
\[
\{e_{n};n\in A_{j}\},
\]
denoted by $E_j$, is a complemented copy of $\ell_{1}$ which is isometrically isomorphic to $\ell_{1}%
$. From \cite{LP} we know that
\[
\mathcal{L}(\ell_{1};F)\diagdown \Pi_{1}(\ell_{1};F)\neq \emptyset,
\]
so
\[
\mathcal{L}(E_n;F)\diagdown \Pi_{1}(E_n;F)\neq \emptyset {\rm ~for~every~~} n.
\]
Now proceed as in the proof of Theorem \ref{aaa} to complete the proof.

\medskip

\noindent (b) Using that $c_0$ enjoys the same property of $\ell_1$ we used above and that
\[
\mathcal{L}(c_{0};F)\diagdown \Pi_{p}(c_{0};F)\neq \emptyset {\rm ~for~every~} F,
\]
see \cite{Bot-Pell, st}, the proof of (a) can be repeated line by line.
\end{proof}

The proof of Theorem \ref{aaa} also makes clear that the result
holds if: (i) $F$ contains a sequence $(F_{n})_{n=1}^{\infty}$ of
infinite-dimensional subspaces such that $F_{n}\cap F_{m}=\{0\}$ if
$m\neq n$; (ii) $\mathcal{L}(E;F_n)\diagdown \Pi_{p}(E;F_n)\neq
\emptyset$ for every $n \in \mathbb{N}$. An adaptation of this case
yields, for example:

\begin{proposition} If $p\geq1$, then $\mathcal{L}(E;F)\diagdown \Pi_{p}(E;F)$ is lineable for every Banach space $E$ and every Banach space
$F$ containing a copy of $c_0$.
\end{proposition}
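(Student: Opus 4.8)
The plan is to reduce the statement to the variant of Theorem~\ref{aaa} already isolated in the remarks preceding the proposition, namely the version in which $F$ — rather than $E$ — carries the relevant direct-sum structure. Concretely, I would check that if $F$ contains a copy of $c_0$, then $F$ contains a sequence $(F_n)_{n=1}^\infty$ of closed infinite-dimensional subspaces with $F_n \cap F_m = \{0\}$ for $m \neq n$, and such that $\mathcal{L}(E;F_n)\diagdown \Pi_p(E;F_n)\neq\emptyset$ for every $n$; then the argument of the second half of the proof of Theorem~\ref{aaa} (the ``$F$ side'') applies verbatim.

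First I would fix an isomorphic copy of $c_0$ inside $F$, with normalized unconditional basic sequence $(e_n)_{n=1}^\infty$ equivalent to the $c_0$-basis, spanning a subspace $G \subseteq F$. Decompose $\mathbb{N} = \bigcup_{j=1}^\infty A_j$ into infinitely many infinite pairwise disjoint sets as in (\ref{bbbb}), and let $F_j$ be the closed span of $\{e_n : n \in A_j\}$; each $F_j$ is again isomorphic to $c_0$ and, being spanned by a subsequence of an unconditional basic sequence, is complemented in $G$ with projection norm bounded by the unconditional constant $\varrho$ (this is exactly the computation carried out in the proof of Theorem~\ref{aaa}, and it gives the estimate (\ref{mmm}) for the $F_j$). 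Since the $A_j$ are disjoint and $(e_n)$ is a basis, $F_i \cap F_j = \{0\}$ for $i \neq j$.

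Next I would produce, for each $j$, an operator $u_j \in \mathcal{L}(E;F_j)\diagdown \Pi_p(E;F_j)$. Since $F_j$ is isomorphic to $c_0$, it suffices to exhibit a bounded linear non-$p$-summing operator from $E$ into $c_0$. Such an operator always exists: for instance, pick any normalized weakly null (equivalently, weak-$1$-summable) sequence $(x_n^*)$ in $E^*$ — which exists because $E$ is infinite-dimensional — and define $T : E \to c_0$ by $T x = (x_n^*(x)/n)_{n=1}^\infty$; then $T$ is bounded but not absolutely $p$-summing, since evaluating on a suitable weakly $p$-summable sequence makes $\sum_n \|T x_n\|^p$ diverge. (Alternatively one may invoke, as in Proposition~3.1(b), the known fact that $\mathcal{L}(c_0;F)\diagdown\Pi_p(c_0;F)\neq\emptyset$ together with a surjection or inclusion argument; but for the $F$-side one really needs non-$p$-summing operators \emph{into} $c_0$, so the explicit construction above is cleanest.) Composing with the isomorphism $F_j \cong c_0$ transfers this to an operator $u_j : E \to F_j$ with the required properties.

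Finally, let $\widetilde{u_j} : E \to F$ be $u_j$ followed by the inclusion $F_j \hookrightarrow F$; each $\widetilde{u_j}$ is bounded and not $p$-summing. Using (\ref{mmm}) exactly as in the proof of Theorem~\ref{aaa}, for $i \neq j$ one gets $\|\widetilde{u_i}(x) + \widetilde{u_j}(x)\| \geq \varrho^{-1}\|\widetilde{u_i}(x)\|$ for all $x \in E$, whence $\widetilde{u_i} + \widetilde{u_j}$ is not $p$-summing, and more generally any nonzero linear combination of the $\widetilde{u_j}$ fails to be $p$-summing; a separation argument of the same type shows the $\widetilde{u_j}$ are linearly independent. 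Hence their span is an infinite-dimensional subspace of $\mathcal{L}(E;F)$ lying, apart from $0$, inside $\mathcal{L}(E;F)\diagdown \Pi_p(E;F)$, which is the assertion. The only genuinely non-routine point is the construction of a non-$p$-summing operator from an \emph{arbitrary} infinite-dimensional $E$ into $c_0$; everything else is a transcription of the machinery already set up for Theorem~\ref{aaa}.
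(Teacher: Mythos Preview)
Your overall architecture matches the paper's: split the copy of $c_0$ inside $F$ into infinitely many disjoint copies $F_j$ of $c_0$, find non-$p$-summing operators $u_j \colon E \to F_j$, and then run the ``$F$-side'' argument of Theorem~\ref{aaa}. The paper does exactly this, simply quoting from \cite{belg} the fact that $\mathcal{L}(E;c_0)\diagdown\Pi_p(E;c_0)\neq\emptyset$ for every infinite-dimensional $E$ and every $p\geq 1$.

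The gap in your version is precisely at the step you flag as ``the only genuinely non-routine point'': your explicit construction of a non-$p$-summing operator $E\to c_0$ does not work. First, a normalized \emph{weakly} null sequence in $E^*$ need not exist (take $E=c_0$, so $E^*=\ell_1$ has the Schur property); what Josefson--Nissenzweig gives is a weak$^*$-null normalized sequence, and ``weakly null'' is certainly not equivalent to ``weakly $1$-summable''. Second, and more seriously, even granting a suitable sequence $(x_n^*)$, the operator $Tx=(x_n^*(x)/n)_n$ can easily be $p$-summing: for $E=\ell_2$ and $x_n^*=e_n$ one has $T=i\circ D$ with $D\colon\ell_2\to\ell_2$, $De_n=e_n/n$, Hilbert--Schmidt (hence $2$-summing), and $i\colon\ell_2\hookrightarrow c_0$ the inclusion, so $T\in\Pi_2(\ell_2;c_0)\subseteq\Pi_p(\ell_2;c_0)$ for all $p\geq 2$. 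The assertion that ``evaluating on a suitable weakly $p$-summable sequence makes $\sum_n\|Tx_n\|^p$ diverge'' is therefore unsupported. The clean fix is exactly what the paper does: invoke the non-coincidence result $\mathcal{L}(E;c_0)\neq\Pi_p(E;c_0)$ from \cite{belg} rather than attempt an ad hoc construction.
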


\begin{proof} From \cite{belg} we know that
\[
\mathcal{L}(E;c_{0})\diagdown \Pi_{p}(E;c_{0})\neq \emptyset {\rm ~for~every~}E.
\]
Using again that $c_{0}$ has infinitely many \textquotedblleft
independent\textquotedblright\ copies of itself, the idea of the proof of
Theorem \ref{aaa} provides the result.
\end{proof}

\medskip

\section{Non-absolutely $(q,1)$-summing linear operators}

In this section we turn our attention to the lineability of the set
of non-absolutely $(q,1)$-summing operators, which is, {\it a
priori}, a more delicate matter. Absolutely $(q,1)$-summing
operators are closely connected to the cotypes of the underlying
spaces; for this reason, given a Banach space $F$, we define $\cot F
= \inf \{q \geq 2 : F {\rm ~has~cotype~}q\}$.\\
\indent As before, to prove the lineability of the set
$\mathcal{L}(E;F)\diagdown \Pi_{q,1}(E;F)$, a non-coincidence result
is needed to start the
process. A result from \cite{Bot-Pell} will serve this purpose. If $E$ has unconditional basis $(x_{n})_{n=1}^{\infty}$, define%
\[
\mu_{E,(x_{n})}=\inf \mbox{\Large\{}\,t:(a_{j})_{j=1}^{\infty}\in \ell_{t}\text{ whenever }x=%
{\displaystyle\sum\limits_{j=1}^{\infty}}
a_{j}x_{j}\in E \mbox{\Large\}}.
\]

\begin{proposition}\label{prop}{\rm (\cite[Corollary 2.1]{Bot-Pell})}
If $q<\cot F,$ $E$ has an unconditional normalized basis
$(x_{n})_{n=1}^{\infty}$ and $\mu_{E,(x_{n})}>q$, then%
\[
\mathcal{L}(E;F)\diagdown \Pi_{q,1}(E;F)\neq \emptyset.
\]
\end{proposition}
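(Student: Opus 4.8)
\textbf{Plan for proving Proposition~\ref{prop}.}
The statement to prove is that if $q<\cot F$, $E$ has an unconditional normalized basis $(x_n)_{n=1}^\infty$ with $\mu_{E,(x_n)}>q$, then $\mathcal{L}(E;F)\diagdown\Pi_{q,1}(E;F)\neq\emptyset$. Since the excerpt attributes this to \cite[Corollary 2.1]{Bot-Pell}, the role here is to reconstruct the natural argument. The plan is to exhibit a concrete operator that is bounded but fails to be absolutely $(q,1)$-summing, using on the domain side the ``fat'' unconditional basis (reflected in $\mu_{E,(x_n)}>q$) and on the range side the failure of cotype $q$.

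First I would pick a real number $t$ with $q<t<\min\{\mu_{E,(x_n)},\cot F\}$; this is possible by the two hypotheses $\mu_{E,(x_n)}>q$ and $\cot F>q$. By the definition of $\mu_{E,(x_n)}$, since $t<\mu_{E,(x_n)}$, there is an element $x=\sum_j a_j x_j\in E$ whose coefficient sequence $(a_j)$ is not in $\ell_t$; in particular $(a_j)\notin\ell_q$ (as $q<t$), so $\sum_j|a_j|^q=\infty$. On the range side, since $t<\cot F$ the space $F$ does not have cotype $t$, so there is a sequence $(y_j)$ in $F$ with $\sum_j\|y_j\|^t<\infty$ yet $\big(\mathbb{E}\,\|\sum_j r_j y_j\|^2\big)^{1/2}=\infty$ (Rademacher averages), i.e. the Rademacher series $\sum_j r_j y_j$ does not converge in $L_2(F)$; equivalently, after a suitable choice one gets that the formal series $\sum_j y_j\otimes x_j^{*}$ defines a bounded operator but with wild summing behaviour. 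Concretely, I would define $T\colon E\to F$ by $T\big(\sum_j c_j x_j\big)=\sum_j c_j y_j$ and argue boundedness from unconditionality of $(x_n)$ together with the norm decay of $\|y_j\|$ — this is where one must be a little careful: $\sum_j|c_j|\,\|y_j\|$ need not converge for all $(c_j x_j)\in E$, so instead boundedness should be extracted from the fact that $(y_j)$ is a weakly summable–type sequence of the right order and $(x_j)$ is unconditional, via a closed graph / uniform boundedness argument rather than an absolutely convergent estimate.

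Having $T$ bounded, I would then show $T\notin\Pi_{q,1}(E;F)$ by testing against the weakly $1$-summable sequence $(x_j)$ (which is weakly $1$-summable precisely because $(x_n)$ is a normalized unconditional basis, so $\sum_j|x^*(x_j)|\le C\|x^*\|$ for every $x^*\in E^*$). One computes $\sum_j\|T x_j\|^q=\sum_j\|y_j\|^q$, and the point is to arrange the choice of $(y_j)$ so that simultaneously $(\|y_j\|)\in\ell_t\setminus\ell_q$ while the Rademacher condition still fails; since $t>q$, having $\sum\|y_j\|^t<\infty$ does not force $\sum\|y_j\|^q<\infty$, and a rescaling of the original cotype-failing sequence (splitting it into blocks and normalizing each block appropriately, a standard trick) produces $(y_j)$ with $\sum_j\|y_j\|^q=\infty$. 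Then $\sum_j\|Tx_j\|^q=\infty$ against a weakly $1$-summable $(x_j)$, so $T$ is not absolutely $(q,1)$-summing.

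\textbf{Main obstacle.} The delicate step is the simultaneous bookkeeping in the middle: one needs a single sequence $(y_j)\subset F$ that (i) is tame enough that $T\big(\sum c_j x_j\big):=\sum c_j y_j$ is a \emph{bounded} operator on all of $E$ — not merely densely defined — and (ii) is wild enough that $\sum_j\|y_j\|^q=\infty$ while still witnessing failure of cotype $t$. Reconciling (i) and (ii) is exactly where the hypothesis $q<t<\min\{\mu_{E,(x_n)},\cot F\}$ is used with full force, and where one imports the precise form of the cotype-$t$ failure (a finitely supported version, applied on the blocks $A_j$ of a decomposition of $\mathbb{N}$, with the block norms tuned to lie in $\ell_t\setminus\ell_q$). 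Once $(y_j)$ is constructed, boundedness of $T$ follows from unconditionality of $(x_n)$ and the definition of $\mu_{E,(x_n)}$ (the coefficient functionals are uniformly bounded and the admissible coefficient sequences all lie in $\ell_s$ for every $s>\mu_{E,(x_n)}$, in particular in some $\ell_s$ with $s<t$), and the non-summing conclusion is then immediate from the computation above.
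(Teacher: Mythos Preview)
The paper does not prove this proposition; it is quoted from \cite[Corollary~2.1]{Bot-Pell} and used as a black box, so your proposal has to stand on its own. The overall shape --- build a diagonal operator $T(x_j)=y_j$ and test it on a weakly $1$-summable sequence --- is the right one, but two of the steps are genuinely broken.

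First, your test sequence is wrong. You assert that the normalized unconditional basis $(x_j)$ is itself weakly $1$-summable, i.e.\ $\sum_j |x^*(x_j)| \le C\|x^*\|$ for all $x^* \in E^*$. This fails already for $E=\ell_p$, $1<p<\infty$: for $x^*=(b_j)\in\ell_{p'}$ one gets $\sum_j|x^*(e_j)|=\sum_j|b_j|$, which is typically infinite. The sequence to test against is $(a_j x_j)$, where $x=\sum_j a_j x_j\in E$ is the element with $(a_j)\notin\ell_q$ you already produced: unconditional convergence of $\sum a_j x_j$ makes $(a_j x_j)$ weakly unconditionally Cauchy, hence weakly $1$-summable, and the quantity one must blow up is $\sum_j|a_j|^q\|y_j\|^q$, not $\sum_j\|y_j\|^q$. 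As written, your element $x$ plays no further role after you construct it, which is a sign that it is being used in the wrong place.

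Second, the boundedness argument contains an inequality pointing the wrong way, and this is not a typo but a missing mechanism. You say coefficient sequences of elements of $E$ lie in $\ell_s$ for every $s>\mu_{E,(x_n)}$, ``in particular in some $\ell_s$ with $s<t$''. But you chose $t<\mu_{E,(x_n)}$, so every such $s$ satisfies $s>t$; the H\"older-type pairing you are reaching for does not close, and even with the inequality corrected the crude estimate $\|\sum c_j y_j\|\le\sum|c_j|\,\|y_j\|$ is generally too weak to yield boundedness. The hypothesis $\mu_{E,(x_n)}>q$ supplies the bad coefficient sequence $(a_j)$; it does not help make $T$ bounded. Boundedness has to come instead from the unconditionality of $(x_n)$ together with a block construction of $(y_j)$ witnessing the failure of cotype $q$ in $F$: one chooses finite families with small Rademacher averages but large $\ell_q$-norms, so that on each block $\|\sum_j c_j y_j\|$ is controlled by $\|\sum_j c_j x_j\|$ via unconditionality, and then glues the blocks. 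Your remark about ``splitting into blocks and normalizing'' gestures at this, but never connects it to the boundedness of $T$, and that connection is precisely the missing content.
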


By adapting the arguments we used so far with Proposition \ref{prop}
as starting point, it is not difficult to prove that:

\begin{proposition}\label{pro} If $1 \leq q < {\rm cot} F$ and $p > q$, then $\mathcal{L}(\ell_{p};F)\diagdown \Pi_{q,1}(\ell_{p};F)$ is lineable.
\end{proposition}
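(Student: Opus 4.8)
The plan is to mimic exactly the decomposition argument from the proof of Theorem \ref{aaa}, using Proposition \ref{prop} in place of the Davis--Figiel-type non-coincidence input. First I would fix the decomposition $\mathbb{N} = A_1 \cup A_2 \cup \cdots$ of $\mathbb{N}$ into infinitely many infinite pairwise disjoint sets, and for each $j$ let $E_j$ be the closed span in $\ell_p$ of $\{e_n : n \in A_j\}$, where $(e_n)$ is the canonical unit vector basis. Since the canonical basis of $\ell_p$ is $1$-unconditional, each $E_j$ is isometrically isomorphic to $\ell_p$ and $1$-complemented in $\ell_p$ (the coordinate projection $P_j$ has norm $1$); this is precisely the structural input (i) needed to run the argument of Theorem \ref{aaa}.

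Next I would verify the non-coincidence input (ii): that $\mathcal{L}(E_j;F) \diagdown \Pi_{q,1}(E_j;F) \neq \emptyset$ for every $j$. Since $E_j \cong \ell_p$, the canonical basis $(e_n)_{n=1}^\infty$ of $E_j$ is a normalized unconditional basis with $\mu_{E_j,(e_n)} = p$ (the coefficient sequence of $x = \sum a_n e_n \in \ell_p$ lies in $\ell_t$ exactly when $t \geq p$, since $\ell_p \subseteq \ell_t$ for $t \geq p$ and not conversely). The hypotheses $q < \cot F$ and $p > q$ then give $\mu_{E_j,(e_n)} = p > q$ and $q < \cot F$, so Proposition \ref{prop} applies and yields an operator $u_j \in \mathcal{L}(E_j;F) \diagdown \Pi_{q,1}(E_j;F)$.

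With (i) and (ii) in hand, the rest is a line-by-line repetition of the proof of Theorem \ref{aaa}. Let $\widetilde{u_j} := u_j \circ P_j \colon \ell_p \to F$. Since $P_j$ restricts to the identity on $E_j$ and $u_j$ is not absolutely $(q,1)$-summing, there is a weakly $1$-summable sequence $(x_i)$ in $E_j$ with $\sum_i \|u_j(x_i)\|^q = +\infty$; this sequence is weakly $1$-summable in $\ell_p$, $\widetilde{u_j}(x_i) = u_j(x_i)$, and $\widetilde{u_k}(x_i) = 0$ for $k \neq j$ because $P_k$ kills $E_j$. Hence for any nontrivial linear combination $a_1 \widetilde{u_1} + \cdots + a_n \widetilde{u_n}$ with $a_k \neq 0$, feeding in the sequence attached to index $k$ shows $\sum_i \|(a_1 \widetilde{u_1} + \cdots + a_n \widetilde{u_n})(x_i)\|^q = |a_k|^q \sum_i \|u_k(x_i)\|^q = +\infty$, so the combination is not absolutely $(q,1)$-summing; the same disjointness argument (evaluating on vectors $x_k \in E_k$ with $\widetilde{u_k}(x_k) \neq 0$) gives linear independence. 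Therefore the span of $\{\widetilde{u_j} : j \in \mathbb{N}\}$ is an infinite-dimensional subspace contained in $(\mathcal{L}(\ell_p;F) \diagdown \Pi_{q,1}(\ell_p;F)) \cup \{0\}$.

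I do not anticipate a genuine obstacle here: the only point requiring a moment's care is the computation $\mu_{\ell_p,(e_n)} = p$ and the check that the strict inequality $\mu_{E_j,(e_n)} > q$ (rather than $\geq$) is what Proposition \ref{prop} demands — this is guaranteed by the hypothesis $p > q$, which is exactly why the statement assumes $p > q$ rather than $p \geq q$. Everything else is the now-familiar "disjoint supports" mechanism, so the proof should be short.
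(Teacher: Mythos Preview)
Your proposal is correct and follows exactly the approach the paper intends: it says Proposition~\ref{pro} follows ``by adapting the arguments we used so far with Proposition~\ref{prop} as starting point,'' which is precisely your plan of decomposing $\ell_p$ into complemented copies $E_j \cong \ell_p$, invoking Proposition~\ref{prop} via $\mu_{\ell_p,(e_n)} = p > q$, and then running the disjoint-support argument of Theorem~\ref{aaa}. Your check that $\mu_{\ell_p,(e_n)} = p$ and your attention to the strict inequality $p>q$ are exactly the points that need care.
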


We shall improve substantially both Proposition \ref{pro} (in the
sense that $\ell_p$ can be replaced by spaces $E$ with unconditional
basis $(x_n)$ such that $\mu_{E,(x_{n})}>q$) and Proposition
\ref{prop} (in the sense that $\mathcal{L}(E;F)\diagdown\Pi%
_{q,1}(E;F)$ is actually lineable). We will need the following
result:

\begin{lemma}{\rm (\cite[Lemma 1.1]{seoane})}
\label{ss}Let $(a_{n})_{n=1}^{\infty}$ be a sequence of positive real numbers.
If $%
{\displaystyle\sum\limits_{j=1}^{\infty}}
a_{n}=\infty$, then there is a sequence of sets of positive integers
$(A_{j})_{j=1}^{\infty}$ so that:

\noindent {\rm (i)} $\mathbb{N}=A_{1}\cup A_{2}\cup \cdots$.

\noindent {\rm (ii)} Each $A_{j}$ has the same cardinality of $\mathbb{N}$.

\noindent {\rm (iii)} The sets $A_{j}$ are pairwise disjoint.

\noindent {\rm (iv)} $%
{\displaystyle\sum\limits_{j\in A_{k}}}
a_{j}=\infty$ for each $k$.
\end{lemma}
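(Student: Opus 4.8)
The plan is to reduce the statement to an elementary bookkeeping argument: first chop $\mathbb{N}$ into consecutive finite blocks on each of which the partial sum of the $a_{j}$'s is at least $1$, and then distribute these blocks among the sets $A_{k}$ so that every $A_{k}$ receives infinitely many of them.

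First I would construct the blocks. Since $\sum_{j} a_{j} = \infty$ and the terms are positive, every tail $\sum_{j \geq N} a_{j}$ also diverges; hence, starting from $j = 1$ and repeatedly taking the shortest initial segment of the remaining integers whose $a$-sum exceeds $1$, one obtains a partition $\mathbb{N} = I_{1} \cup I_{2} \cup \cdots$ into consecutive, pairwise disjoint, nonempty finite intervals with $\sum_{j \in I_{m}} a_{j} \geq 1$ for every $m$.

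Next I would fix, once and for all, a partition of the index set $\mathbb{N}$ of block labels into countably many infinite pairwise disjoint sets $N_{1}, N_{2}, \ldots$; this is the ``easy'' version of the lemma and can be written down explicitly (for instance, let $N_{k}$ be the set of positive integers $m$ whose $2$-adic valuation equals $k-1$, or take the columns of any fixed bijection $\mathbb{N} \cong \mathbb{N} \times \mathbb{N}$). Then set $A_{k} := \bigcup_{m \in N_{k}} I_{m}$. Properties (i) and (iii) are immediate, since $\{I_{m}\}$ partitions $\mathbb{N}$ while $\{N_{k}\}$ partitions the labels; (ii) holds because $A_{k}$ is a union of infinitely many nonempty finite sets, hence countably infinite; and (iv) follows from $\sum_{j \in A_{k}} a_{j} = \sum_{m \in N_{k}} \bigl( \sum_{j \in I_{m}} a_{j} \bigr) \geq \sum_{m \in N_{k}} 1 = \infty$.

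There is no genuinely hard step here: the only point requiring care is the block construction, where one must invoke the divergence of the tails of the series to guarantee that each block can be closed off after finitely many terms. Everything else is purely formal.
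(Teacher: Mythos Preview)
Your argument is correct. Note, however, that the paper does not actually prove this lemma: it is quoted verbatim from \cite[Lemma 1.1]{seoane} and used as a black box, so there is no ``paper's own proof'' to compare against. Your block-and-redistribute construction is the standard elementary proof of this fact and is presumably close to what appears in the cited reference.
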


\begin{theorem}\label{prop2}
If $1\leq q<\cot F,$ $E$ has an unconditional normalized basis
$(x_{n})_{n=1}^{\infty}$ and $\mu_{E,(x_{n})}>q$,
then%
\[
\mathcal{L}(E;F)\diagdown \Pi_{q,1}(E;F)
\]
is lineable.
\end{theorem}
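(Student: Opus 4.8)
The strategy is to reuse the decomposition-and-embedding machinery of Theorem~\ref{aaa}, but since the domain space $E$ need not have a \emph{complemented} infinite-dimensional subspace generated by a sub-block of the basis, and since we only know $\mathcal{L}(E;F)\diagdown\Pi_{q,1}(E;F)\neq\emptyset$ from Proposition~\ref{prop} under the numerical hypothesis $\mu_{E,(x_n)}>q$, the delicate point is to split $\mathbb N$ so that \emph{each piece still witnesses} $\mu>q$. This is exactly what \lemref{ss} is for. First I would pick $t$ with $q<t<\mu_{E,(x_n)}$ and recall that, by definition of $\mu_{E,(x_n)}$, there is some $x=\sum_j a_j x_j\in E$ with $(a_j)\notin\ell_t$, i.e. $\sum_j |a_j|^t=\infty$. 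Apply \lemref{ss} to the sequence $(|a_j|^t)_j$ to obtain a partition $\mathbb N=\bigcup_{k} A_k$ into infinitely many pairwise disjoint infinite sets with $\sum_{j\in A_k}|a_j|^t=\infty$ for every $k$.

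Next, let $E_k$ be the closed span of $\{x_n:n\in A_k\}$; as noted in the proof of Theorem~\ref{aaa} (combining \cite[Prop.~1.c.6]{Lin} and \cite[Prop.~1.1.9]{Kalton}), each $(x_n)_{n\in A_k}$ is an unconditional normalized basic sequence, so $E_k$ has an unconditional normalized basis. Moreover $\mu_{E_k,(x_n)_{n\in A_k}}\geq t>q$: indeed $\sum_{j\in A_k}a_j x_j$ converges in $E$ (unconditional convergence of $\sum a_j x_j$) and lies in $E_k$, while its coordinate sequence is not in $\ell_t$. Since $q<\cot F$ is unchanged, Proposition~\ref{prop} applies to each pair $(E_k,F)$ and yields operators $u_k\in\mathcal{L}(E_k;F)\diagdown\Pi_{q,1}(E_k;F)$.

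The remaining obstacle—and the reason this is subtler than Theorem~\ref{aaa}—is that the $E_k$ need not be complemented in $E$, so one cannot simply precompose with a bounded projection $E\to E_k$. The fix is to use the natural norm-one ``restriction-type'' map only on the subspace: as in the second half of the proof of Theorem~\ref{aaa}, the relevant estimate is the lower bound coming from disjointness of supports. Concretely, let $\pi_k\colon E_k\hookrightarrow E$ be the inclusion; one does not extend $u_k$ to all of $E$ by a projection, but instead observes that the $A_k$ being pairwise disjoint gives, for $x=\sum a_j x_j\in E$, control on the block $\sum_{j\in A_k}a_j x_j$ via the unconditional basis constant (exactly the computation $2\|P_k y\|\le 2\varrho\|y\|$ in Theorem~\ref{aaa}), so the canonical projections $P_k$ onto $E_k$ \emph{inside the closed span of all the $E_k$} are bounded by $\varrho$; one then works inside $E$ itself since $\{x_n:n\in\mathbb N\}$ is a basis of $E$, so $P_k\colon E\to E_k$ is bounded with $\|P_k\|\le\varrho$ and $P_k|_{E_k}=\mathrm{id}$. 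Hence $\widetilde{u_k}:=u_k\circ P_k\colon E\to F$ is well defined and bounded, and $\widetilde{u_k}|_{E_k}=u_k$.

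Finally, the linear-independence and span arguments are copied verbatim from Theorem~\ref{aaa}: given scalars $a_1,\dots,a_n$ not all zero, pick $k$ with $a_k\neq0$; since $u_k\notin\Pi_{q,1}(E_k;F)$ there is a sequence $(y_j)$ in $E_k$ that is weakly $1$-summable (equivalently, $\sup_{\varphi\in B_{E^*}}\sum_j|\varphi(y_j)|<\infty$) with $\sum_j\|u_k(y_j)\|^q=\infty$; this sequence is weakly $1$-summable in $E$ as well, $\widetilde{u_k}(y_j)=u_k(y_j)$, and $\widetilde{u_i}(y_j)=u_i(P_i y_j)=0$ for $i\neq k$ because $P_i(E_k)=0$ (disjoint supports). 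Therefore $\sum_j\|\sum_{i=1}^n a_i\widetilde{u_i}(y_j)\|^q=|a_k|^q\sum_j\|u_k(y_j)\|^q=\infty$, so every nonzero element of $\mathrm{span}\{\widetilde{u_k}\}$ lies in $\mathcal{L}(E;F)\diagdown\Pi_{q,1}(E;F)$; and evaluating at a vector $x_k\in E_k$ with $\widetilde{u_k}(x_k)\neq0$ shows the $\widetilde{u_k}$ are linearly independent, giving an infinite-dimensional subspace and hence lineability. I expect the only genuinely nontrivial step to be the verification that the partition from \lemref{ss} transports the condition $\mu>q$ down to each $E_k$; everything after that is bookkeeping parallel to Theorem~\ref{aaa}.
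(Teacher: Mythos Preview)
Your proposal is correct and follows essentially the same route as the paper: apply Lemma~\ref{ss} to a divergent series witnessing $\mu_{E,(x_n)}>q$, observe that each block subspace $E_k$ inherits $\mu_{E_k,(x_n)_{n\in A_k}}>q$ and is complemented in $E$ via the unconditional-basis projections, invoke Proposition~\ref{prop} on each $E_k$, and then rerun the argument of Theorem~\ref{aaa}. Your mid-proof worry that the $E_k$ might not be complemented is unfounded and you correctly talk yourself out of it---since $(x_n)$ is by hypothesis an unconditional basis of \emph{all} of $E$ (not merely of a subspace), the block projections $P_k\colon E\to E_k$ are bounded by the unconditional constant exactly as in the first half of Theorem~\ref{aaa}, so there is no additional subtlety here beyond the use of Lemma~\ref{ss}.
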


\begin{proof} Since $\mu_{E,(x_{n})}>q$, we can find $(a_{j})_{j=1}^{\infty}$ and $\varepsilon >0$ so that%
\begin{equation}%
x = {\displaystyle\sum\limits_{j=1}^{\infty}}
a_{j}x_{j}\in E {\rm ~~~~ and ~~~~ } {\displaystyle\sum\limits_{j=1}^{\infty}}
\left\vert a_{j}\right\vert ^{q+ \varepsilon}=\infty.\label{x}%
\end{equation}
Let $(A_{j})_{j=1}^{\infty}$ be the sets of Lemma \ref{ss}
associated to the divergent series $
{\displaystyle\sum\limits_{j=1}^{\infty}} \left| a_{j}\right| ^{q+
\varepsilon}$. For each positive integer $k,$ define
\[
E_{k}=\overline{span\{x_{j};j\in A_{k}\}}.
\]
From the proof of Theorem \ref{aaa} we know that each $\{x_{n};n\in A_{k}\}$ is an unconditional basic
sequence and $E_k$ is a complemented
subspace of $E$. From
the choice of $A_{k}$ we have that $\mu_{E_{k},(x_{n})}>q$, so Proposition \ref{prop} gives that
\[
\mathcal{L}(E_{k};F)\diagdown \Pi_{q,1}(E_{k};F)\neq\emptyset
\]
for every $k$. Since each $E_{k}$ is a complemented subspace of $E$ the result
follows by repeating once more the procedure of the proof of Theorem \ref{aaa}.
\end{proof}

\bigskip

\noindent {\bf Acknowledgements.} The authors thank V. Ferenczi and M. A. Sofi for helpful conversations on the subject of this paper.

\bigskip

\bigskip\noindent[Geraldo Botelho] Faculdade de Matem\'atica, Universidade
Federal de Uberl\^andia, 38.400-902 - Uberl\^andia, Brazil, e-mail: botelho@ufu.br.

\medskip

\noindent[Diogo Diniz] IMECC- UNICAMP, Caixa Postal 6065, Campinas, SP, Brazil, e-mail:
diogodpss@gmail.com.

\medskip

\noindent[Daniel Pellegrino] Departamento de Matem\'atica, Universidade
Federal da Para\'iba, 58.051-900 - Jo\~ao Pessoa, Brazil, e-mail: dmpellegrino@gmail.com.

\end{document}